\newcommand{\R}{\mathbb{R}}
\newcommand{\PP}{\mathbb{P}}
\DeclareMathOperator{\tr}{Tr}
\newtheorem{prop}[theorem]{Proposition}
\newtheorem{defi}[theorem]{Definition}
\title{A moment inequality and positivity for signed graph Laplacians \thanks{\textbf{Funding:} This work was supported by the National Science Foundation
  under grant NSF-DMS 1615418}}
\author{Ikemefuna Agbanusi\thanks{Colorado College, Department of
    Mathematics and Computer Science, Colorado Springs, CO 80903 
  (\email{iagbanusi@coloradocollege.edu}.)}
\and Jared C. Bronski\thanks{University of Illinois, Department of
  Mathematics, Urbana, IL 61801 
  (\email{bronski@illinois.edu}, \email{dkielty2@illinois.edu}.)}
\and Derek Kielty\footnotemark[3]}
\begin{document}

\maketitle

\begin{abstract} A number of recent papers have considered signed graph Laplacians, a generalization of the classical
graph Laplacian, where the edge weights are allowed to take either
sign. In the classical case, where the edge weights are all positive,
the Laplacian is positive semi-definite with the dimension of the
kernel representing the number of connected components of the
graph. In many applications one is interested in establishing
conditions which guarantee the positive semi-definiteness of the
matrix. In this paper we present an inequality on the eigenvalues of a
weighted graph Laplacian (where the weights need not have any
particular sign) in terms of the first two moments of the edge weights. This bound
involves the eigenvalues of the equally weighted Laplacian on the
graph as well as the eigenvalues of the adjacency matrix of the line
graph (the edge-to-vertex dual graph). For a regular graph the bound
can be expressed entirely in terms of the second eigenvalue of the
equally weighted Laplacian, an object that has been extensively
studied in connection with expander graphs and spectral measures of
graph connectivity. We present several examples including
Erd\H{o}s--R\'enyi random graphs in the critical and subcritical
regimes, random large $d$-regular graphs, and the complete graph, for
which the inequalities here are tight.

\end{abstract}

\begin{keywords}
Signed Laplacian, Eigenvalue Inequality
\end{keywords}

\section{Introduction}
There are a number of problems in applied mathematics where one is led
to consider the eigenvalues of a signed (combinatorial) graph
Laplacian: given a graph $G$ with $N$ vertices and $E$ edges the signed combinatorial
Laplacian is an $N\times N$ matrix of
the form 
\begin{equation}
L_{ij} (\boldsymbol{\gamma})= \begin{cases} 
  \sum_{k\neq i} \gamma_{ik} & i = j \\
  -\gamma_{ij} & i \neq j,~~ i \sim j\\
  0 & i \neq j,~~ i \not\sim j . 
\end{cases}
\label{eqn:MatLap}
\end{equation}
Here $i \sim j$ denotes the relation that distinct vertices $i$ and
$j$ are connected by an edge,  $\gamma_{ij}$ denotes the weight of edge $ij$ and $\boldsymbol{\gamma} \in {\mathbb R}^E$ is the vector of all edge weights. In this note Laplacian matrices are always symmetric, that is, $\gamma_{ij} = \gamma_{ji}$ for each $i$ and $j$. Note that the vector $\boldsymbol{1}_N=(1,1,1,\ldots,1)^t$ is always in
 the kernel of $L(\boldsymbol{\gamma})$. In the classical case where the edge weights are positive,
$\gamma_{ij}\geq 0$, the matrix is positive semi-definite but in
this paper the edge weights $\gamma_{ij}$ are not assumed to have any
particular sign. An incomplete list of the applications of such matrices includes:
\begin{itemize}
\item Data mining in social
  networks \cite{kunegis.et.al.2010,Hsieh.Chiang.Dhillon.2012}.
\item Hypergraph clustering algorithms \cite{Li}.
\item Models for the evolution of multi-agent
  networks \cite{Hu.Zheng.2013,Hu.Zheng.2014}.
\item Finding the fastest mixing linear consensus
  model \cite{Xiao.Boyd.2004,Xiao.Boyd.Diaconis.2004}.
\item The stability of  phase-locked solutions to the Kuramoto and
  related models \cite{Xu.Egerstedt.Droge.Schilling.2013,Jadbabaie.Motee.Barahona.2004,Dorfler.Bullo.2012,Mirollo.Strogatz.2005,Bronski.DeVille.Park.2012,DeVille.Ermentrout.2016}.
\end{itemize}

In many of these applications one is interested in establishing the
semi-definiteness of a Laplacian matrix, which typically implies
stability of the associated fixed point or a consensus state. For this
reason a number of papers have considered the problem of establishing
semi-definiteness of the Laplacian
matrix, as in \cite{Bronski.DeVille.2014,Zelazo.Burger.2014,Basar1.2016,Basar2.2016,Zelazo.Burger.2017}.

The purpose of this note is to present an inequality (Theorem \ref{thm:Main}) on the
eigenvalues of a Laplacian matrix in terms of the first two moments ---
the mean and variance ---  of
the edge weights. Following this we give a proof of the inequality and applications to both deterministic and random graphs. We first define:
\begin{equation}
Q = \frac{1}{E} \sum _{i>j} \gamma_{ij} \quad \text{and}  \quad P = \frac{1}{E} \sum _{i>j} \gamma_{ij}^2.
\label{eqn:PQ}
\end{equation}
Note that the Cauchy--Schwartz inequality implies that $P - Q^2 \geq
0$.  

In what follows we will denote the $i^{\text{th}}$ eigenvalue of a symmetric $N \times N$ matrix $L$ by $\lambda_i(L)$, numbered in increasing order of absolute values $|\lambda_1(L)| \leq |\lambda_2(L)| \leq \dots \leq |\lambda_N(L)|$. 
\begin{defi}
For a connected graph let $L(\boldsymbol{1}_E)$ denote the equally
weighted Laplacian---the
(combinatorial) Laplacian on the graph $G$ with all edge weights taken
to be unity: $\gamma_{ij}=1$.
\begin{equation*}
  L_{ij} (\boldsymbol{1}_E)= \begin{cases} \deg(v_i) & i = j\\
    -1 & i \neq j, ~ i \sim j \\
 0 & i \neq j,~ i \sim j,  \end{cases}
\end{equation*}
where $\deg(v_i)$ is the degree of vertex $i$. In particular, let $\lambda_i^G = \lambda_i(L(\boldsymbol{1}_E))$ denote the $i^{th}$ eigenvalue
of $L(\boldsymbol{1}_E)$, numbered in increasing order
\begin{equation}
\label{eqn:EigG}
0 = \lambda_1^G < \lambda_2^G \leq \lambda_3^G \leq \dots \leq \lambda_N^G.
\end{equation}
\end{defi}

To state our main result let $A^{LG(G)}$ denote the adjacency matrix of the line graph of $G$ and define the quantity
\[
  \mu = \max_{\boldsymbol{\gamma}\in {\mathbb{R}}^E  ~:~ \boldsymbol{\gamma} \perp \boldsymbol{1}_E } \frac{\langle {\boldsymbol{\gamma}}, (4 I + A^{LG(G)})
    \boldsymbol{\gamma}\rangle }{\Vert \boldsymbol{\gamma}\Vert^2}.
  \]
One has the inequalities
  \begin{equation}
  \label{eq:mu}
      4+ \lambda_{E-1}( A^{LG(G)}) \leq \mu \leq 4+ \lambda_{E}(
    A^{LG(G)}) \leq 2 d_{max} + 2, 
  \end{equation}
where $d_{max}$ is the maximum degree of the vertices in the graph $G$. In the case of a $d$-regular graph we have the equality
$\mu = 2 d + 2 - \lambda_2^G = 4 + \lambda_{E-1}(A^{LG(G)})$ (see the proof of Theorem \ref{thm:Main}).

\begin{theorem}
\label{thm:Main}
Consider a weighted Laplacian matrix $L(\boldsymbol{ \gamma})$ defined as in (\ref{eqn:MatLap}) on a
connected graph $G$ with $\lambda_2^G$ and $\lambda_N^G$ as in (\ref{eqn:EigG}) and $N\geq
3$. If $P$ and $Q$ are defined as in (\ref{eqn:PQ}) then the $N-1$ eigenvalues of
$L(\boldsymbol{\gamma})$ 
corresponding to eigenvectors orthogonal to  ${\bf1}_N$ satisfy the inequality  
\[
Q\lambda_2^G - \sqrt{E(P-Q^2) \mu \frac{N-2}{N-1}}    \leq
\lambda_i(L(\boldsymbol{\gamma})) \leq Q \lambda_N^G + \sqrt{E(P-Q^2) \mu \frac{N-2}{N-1}}. 
\]
Further this inequality is tight: for the complete graph there are
choices of edge weights realizing the upper and lower bounds.

In particular, if $G$ is a $d$-regular graph then
\[
Q\lambda_2^G - \sqrt{E(P-Q^2) (2d+2-\lambda_2^G)\frac{N-2}{N-1}}    \leq
\lambda_i\left(L(\boldsymbol{\gamma})\right) \leq Q \lambda_N^G + \sqrt{E(P-Q^2) (2d+2-\lambda_2^G)\frac{N-2}{N-1}}. 
\]
\end{theorem}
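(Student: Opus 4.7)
The plan is to split $\boldsymbol{\gamma}$ into its edge-average and a zero-mean fluctuation, apply Weyl's inequality to the resulting decomposition of the Laplacian, and bound the operator norm of the fluctuation Laplacian on $\boldsymbol{1}_N^\perp$ through a tracelessness trick combined with a direct Frobenius-norm computation.

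Concretely, I would write $\boldsymbol{\gamma} = Q\boldsymbol{1}_E + \boldsymbol{\delta}$ with $\boldsymbol{\delta}\perp\boldsymbol{1}_E$, so that $\|\boldsymbol{\delta}\|^2 = E(P-Q^2)$ and $L(\boldsymbol{\gamma}) = Q\,L(\boldsymbol{1}_E) + L(\boldsymbol{\delta})$ by linearity. Both summands annihilate $\boldsymbol{1}_N$, so Weyl's inequality applied on the $(N-1)$-dimensional subspace $\boldsymbol{1}_N^\perp$ sandwiches each nontrivial eigenvalue of $L(\boldsymbol{\gamma})$ between $Q\lambda_2^G - \sigma$ and $Q\lambda_N^G + \sigma$, where $\sigma := \|L(\boldsymbol{\delta})|_{\boldsymbol{1}_N^\perp}\|_{\mathrm{op}}$. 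The theorem therefore reduces to showing
\[
\sigma^2 \;\leq\; \mu\,\|\boldsymbol{\delta}\|^2\cdot\tfrac{N-2}{N-1}.
\]

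The main obstacle is to obtain this bound without losing a dimensional factor, and the decisive observation is that $\tr(L(\boldsymbol{\delta})) = 2\sum_e \delta_e = 0$ since $\boldsymbol{\delta}\perp\boldsymbol{1}_E$. Hence $L(\boldsymbol{\delta})|_{\boldsymbol{1}_N^\perp}$ is a symmetric $(N-1)\times(N-1)$ matrix whose eigenvalues sum to zero, and for any such matrix $M$ a short Cauchy--Schwarz argument applied to the $N-2$ non-extremal eigenvalues yields the sharp bound $\|M\|_{\mathrm{op}}^2 \leq \tfrac{N-2}{N-1}\|M\|_F^2$. This is exactly the origin of the $(N-2)/(N-1)$ factor in the statement, and it is the step that transforms an otherwise unwieldy fourth-moment estimate into a clean Frobenius calculation.

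For the Frobenius norm, a direct expansion of entries gives
\[
\|L(\boldsymbol{\delta})\|_F^2 \;=\; \sum_i\Big(\textstyle\sum_{k\sim i}\delta_{ik}\Big)^2 + 2\sum_e \delta_e^2 \;=\; \boldsymbol{\delta}^T(2I + |B|^T|B|)\boldsymbol{\delta} \;=\; \boldsymbol{\delta}^T(4I + A^{LG(G)})\boldsymbol{\delta},
\]
where $|B|$ is the unsigned vertex--edge incidence matrix and I use the standard identity $|B|^T|B| = 2I + A^{LG(G)}$. The variational definition of $\mu$, together with $\boldsymbol{\delta}\perp\boldsymbol{1}_E$, then yields $\|L(\boldsymbol{\delta})\|_F^2 \leq \mu\,\|\boldsymbol{\delta}\|^2$, completing the key estimate. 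The auxiliary bounds in (\ref{eq:mu}) follow from Cauchy interlacing for the restriction of $4I + A^{LG(G)}$ to $\boldsymbol{1}_E^\perp$ and from bounding its largest eigenvalue by the maximum row sum; the $d$-regular simplification uses that $\boldsymbol{1}_E$ is the Perron eigenvector of the $(2d-2)$-regular line graph together with the classical spectral relation $\lambda_{E-1}(A^{LG(G)}) = 2(d-1) - \lambda_2^G$. Tightness on $K_N$ will be witnessed by choosing $\boldsymbol{\delta}$ to be an extremal eigenvector of $4I + A^{LG(K_N)}$ on $\boldsymbol{1}_E^\perp$ together with a $v \perp \boldsymbol{1}_N$ that simultaneously saturates the Cauchy--Schwarz step and the tracelessness inequality (i.e., concentrates the $N-2$ non-extremal eigenvalues at a common value).
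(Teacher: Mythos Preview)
Your proposal is correct and follows essentially the same route as the paper: the mean--fluctuation decomposition of $\boldsymbol{\gamma}$, Weyl-type bounds on $\boldsymbol{1}_N^\perp$, the tracelessness of $L(\boldsymbol{\delta})$ yielding the $(N-2)/(N-1)$ factor via the Samuelson-type bound on traceless symmetric matrices, and the identification of $\|L(\boldsymbol{\delta})\|_F^2$ with the quadratic form $\langle\boldsymbol{\delta},(4I+A^{LG(G)})\boldsymbol{\delta}\rangle$. Your treatment of the $d$-regular case via the Perron eigenvector of the line graph and the incidence-matrix identity is likewise the same argument the paper gives.
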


It is notable that, at least in the case of a regular graph, the
lower bound  depends only on $\lambda_2^G$, the second largest
eigenvalue of the graph Laplacian. The second eigenvalue is, of course, a
well-studied object that encodes important geometric information on
the connectivity of the graph, dating back at least to the work of
Feidler \cite{Feidler}, and is closely connected with the theory of
expander graphs. See, for instance, the review article of Hoory, Linial,
and Wigderson \cite{HLW.2006} for an overview of this area. 

The lower bound in Theorem \ref{thm:Main} is most important when considering the question of the
positivity of $L(\boldsymbol{\gamma})$. It shows that $L(\boldsymbol{\gamma})$
is positive definite if the variance $P - Q^2 \geq 0$ is sufficiently small compared to the mean squared. Specifically, $L(\boldsymbol{\gamma})$ is positive definite if
\begin{equation}
\label{eqn:Positivity}
\frac{(\lambda_2^G)^2}{\mu} E^{-1} Q^2 >  P - Q^2.
\end{equation}

Of course if the variance is small enough then each edge weight is necessarily positive, and hence the
Laplacian is necessarily positive semi-definite. A computation shows that all the edge weights are positive if
\begin{equation}
\label{eqn:Naive}
E^{-1} Q^2 > P - Q^2.
\end{equation}
Thus, when $(\lambda_2^G)^2/\mu >1$ inequality (\ref{eqn:Positivity}) gives an improvement on the naive condition (\ref{eqn:Naive}) in the sense that it allows for a larger variance. In Section \ref{sec:Examples} we present several examples where $(\lambda_2^G)^2/\mu >1$ for graphs with a large number of vertices. We also give an example of a graph where $(\lambda_2^G)^2/\mu \leq 1$. A simple example is the extreme case where $G$ is disconnected so that $\lambda_2^G=0$.

The inequalities in Theorem \ref{thm:Main} are also reminiscent of the Samuelson inequality for a finite set of real numbers. The original Samuelson inequality states that a finite set of real numbers is contained in a ball with center equal to the mean and radius proportional to the standard deviation of its elements (see \cite{JensenStyan} for example).

\begin{proof}[Proof of Theorem \ref{thm:Main}]
Recall that
${\bf 1}_E\in {\mathbb R}^{E}$ represents the vector ${\bf 1}_E
= (1,1,1,\ldots,1)^t$ so we have the orthogonal decomposition 
\[
\boldsymbol{\gamma} = Q {\bf 1}_E +  \tilde{\boldsymbol{\gamma}}, 
\]
where $\tilde{\boldsymbol{\gamma}}$ satisfies 
\begin{equation}
\label{eq:Constraint3}
\langle \tilde{\boldsymbol{\gamma}}, {\bf 1}_E\rangle = 0 \quad \text{and} \quad \Vert \tilde{\boldsymbol{\gamma}}\Vert^2 = E (P-Q^2), 
\end{equation}
and $\Vert\cdot \Vert$ is the Euclidean norm.

This gives a decomposition of the graph Laplacian into a ``mean'' and
``fluctuation'' as follows
\[L(\boldsymbol{\gamma}) = L(Q {\bf 1}_E + \tilde{\boldsymbol{\gamma}}) = Q L({\bf 1}_E) +  L(\tilde{\boldsymbol{\gamma}}).\]
Recall that for symmetric matrices $A$ and $B$ we have the inequalities
\[\lambda_{min} (A+B) \geq \lambda_{min}(A) + \lambda_{min}(B)  \quad \text{and} \quad
  \lambda_{max}(A+B) \leq \lambda_{max}(A) + \lambda_{max}(B),\]
where $\lambda_{min}(A) = \min_i \lambda_i(A)$ and $\lambda_{max}(A) = \max_i \lambda_i(A)$ are the smallest and largest eigenvalues of $A$. Thus, to prove the theorem it is enough to bound the spectral radius of the fluctuation $L(\tilde{\boldsymbol{\gamma}})$ and apply the above inequalities with $A = Q L({\bf 1}_E)$ and $B = L(\tilde{\boldsymbol{\gamma}})$.

To bound the fluctuation recall that the square of the Hilbert--Schmidt norm $\Vert \cdot \Vert_{HS}^2$ of a matrix is the sum of the squares of its eigenvalues, that is
\begin{equation}
  \Vert L(\tilde{\boldsymbol{\gamma}})\Vert_{HS}^2  = \sum_{i=1}^N \lambda_j^2.
  \label{eqn:Constraint1}
\end{equation}
Also note that
\begin{equation}
\label{eqn:Constraint2}
  \lambda_1 = 0 \quad \text{and} \quad \tr(L(\tilde{\boldsymbol{\gamma}})) = 2 \sum_{i>j}  \tilde \gamma_{ij} =
    \sum_{i=1}^N \lambda_i =0. 
\end{equation}
Maximizing $|\lambda_i|$ subject to the constraints (\ref{eqn:Constraint1}) and (\ref{eqn:Constraint2}) we have
\[
  \max_i |\lambda_i| \leq \sqrt{\frac{N-2}{N-1}} \Vert L(\tilde{\boldsymbol{\gamma}})\Vert_{HS}.
 \]

Next we express the Hilbert--Schmidt norm as a quadratic form in the
 edge-weights $\tilde \gamma_{ij}:$
\begin{equation}
\Vert L(\tilde{\boldsymbol{\gamma}})\Vert_{HS}^2 = \sum_i \lambda_i\left(L(\tilde{\boldsymbol{\gamma}})\right)^2 = 2 \sum_{i<j} \tilde \gamma_{ij}^2 +  \sum_i \bigg( \sum_{j \neq i} \tilde \gamma_{ij}\bigg)^2. 
\label{eqn:QuadForm}
\end{equation}
To prove the estimate, we
consider the Hilbert--Schmidt norm as a
quadratic form on $\boldsymbol{\gamma} \in {\mathbb R}^{E}$ and maximize it subject
to the constraints (\ref{eq:Constraint3})

The quadratic form on $\mathbb{R}^E$ in
(\ref{eqn:QuadForm}) can be expressed in graph-theoretic terms as $\langle \boldsymbol{ \gamma}, (4 I_{E \times E
} + A^{LG(G)}) \boldsymbol{\gamma}\rangle,$ where $A^{LG(G)}$ is the adjacency
matrix of the line graph of the graph $G$. The line
graph $LG(G)$ has a vertex set given by the edge set of the original
graph $G$. Two vertices in the line graph are adjacent if the
corresponding edges in $G$ share a vertex. Thus we have that
\[
  \Vert L(\tilde{\boldsymbol{\gamma}}) \Vert_{HS}^2 \leq E(P-Q^2) \max_{\tilde{\boldsymbol{\gamma}}\in {\mathbb{R}}^E  ~:~ \tilde{\boldsymbol{\gamma}} \perp {\bf 1}_E }
  \frac{\langle \tilde{\boldsymbol{\gamma}}, (4 I + A^{LG(G)}) \tilde{\boldsymbol{\gamma}}\rangle}{\Vert
    \tilde{\boldsymbol{\gamma}}\Vert^2},
\]
from which it follows that
\begin{equation}
\label{eq:muBnd}
 \max_i |\lambda_{i}(L(\tilde{\boldsymbol{\gamma}})| \leq \sqrt{E(P-Q^2)\frac{N-2}{N-1}\mu}.
\end{equation}

When $G$ is $d$-regular we will prove that $\mu = 4 + \lambda_{E-1}(A^{LG(G)}) = 2d + 2- \lambda_2^G$ by relating the eigenvalues of the $A^{LG(G)}$ to the eigenvalues of $L(\boldsymbol{1}_E)$ --- the Laplacian of the original graph. 
In this case, the line graph $LG(G)$ is
also $d$-regular, the vector ${\bf 1}_E$ is the eigenvector
corresponding to the largest eigenvalue, and thus the maximum of the Rayleigh quotient in (\ref{eq:muBnd}) is equal to the second largest eigenvalue of $4 I_{E \times E} + A^{LG(G)}$. It is well-known, and easy
to see, that the adjacency matrix of the line graph is related to the
(unoriented) incidence matrix $C$ of the graph $G$ by
\[
  A^{LG(G)} = C^T C - 2 I_{E\times E},
\]
and that the adjacency matrix $A^{G}$ of the original graph is related to the
(unoriented) incidence matrix by
\[
  A^{G} = C C^T - D,
\]
where $D$ is the degree matrix --- the diagonal $N \times N$ matrix with
the vertex degrees along the diagonal. Since $G$ is $d$-regular we have that $D = d I_{N \times N}$ so that
\begin{align*}
  A^{LG(G)} + 4 I_{E\times E} &= C^T C + 2 I_{E \times E}\\
  C C^T &= 2d I_{N \times N} - L(\boldsymbol{1}_E). 
\end{align*}
To conclude the proof recall that the non-zero eigenvalues of $C^T C$
are equal (counting by multiplicity) to the non-zero eigenvalues of $C
C^T$. Thus, the second-largest eigenvalue of $A^{LG(G)}+4I_{E\times E}$, and therefore $\mu$,
is equal to $2d + 2 -\lambda_2^G$, where $\lambda_2^G$ is the
second-smallest eigenvalue of graph Laplacian.
\end{proof}

\section{Examples}
\label{sec:Examples}

In this section we present examples of Theorem \ref{thm:Main} applied to the complete graph
on $N$ vertices, the Erd\H{o}s--R\'enyi random graph in the critical and supercritical scaling
regime, the cyclic graph, and random $d$-regular graphs. Recall that $L(\boldsymbol{\gamma})$ is positive semi-definite whenever $P$ and $Q$ satisfy (\ref{eqn:Positivity}) or (\ref{eqn:Naive}). In each example except the cyclic graph, the quantity $(\lambda_2^G)^2/\mu > 1$ so that condition (\ref{eqn:Positivity}) is an improvement over the naive condition (\ref{eqn:Naive}). 

\subsection{Complete graph}
Note that the complete graph is ``universal''; since any graph on $N$
vertices is a subgraph of $K_N$ the complete graph inequality applies to
any graph, although one can expect to do better with an inequality
that includes information about the topology of the graph in
question. The example of the complete graph is also interesting in
that the upper and lower bounds are actually attained --- while it is
clear that each inequality in the derivation of 
Theorem \ref{thm:Main}
is tight it is not immediately clear that there is a single example
for which all of the inequalities are extremized.

For the complete graph the mean, $Q L({\bf 1}_E)$ is a constant multiple
of the orthogonal projection onto the $N-1$ dimensional subspace
$(1,1,1,\ldots,1)^{\perp}$. It is easy to see that the eigenvalues of $Q L({\bf 1}_E)$ are
given by $0$, with multiplicity $1$, and $N Q$, with multiplicity
$N-1$. It is also noteworthy that in the case where the underlying
topology is the complete graph the mean  $Q L({\bf 1}_E)$ commutes with
{\em every} combinatorial Laplacian, and thus with the fluctuation
$L(\tilde{\boldsymbol{\gamma}})$.

The line graph of the complete graph $K_N$ is the Johnson
$J_{N,2}.$ Using the fact that $K_N$ is regular of degree $N-1$ or
known results about the spectrum of the adjacency matrix of the
Johnson graph it follows that
\[Q N - \sqrt{E (P-Q^2) (2(N-1)+2- N) \frac{N-2}{N-1}} \leq
  \lambda_i(L(\boldsymbol{\gamma})) \leq Q N + \sqrt{E (P-Q^2) (2(N-1)+2- N)
    \frac{N-2}{N-1}},\] 
    or equivalently, 
\[N \bigg( Q - \sqrt{ \frac{N-2}{2}}\sqrt{P-Q^2} \bigg) \leq
  \lambda_i(L(\boldsymbol{\gamma})) \leq N \bigg( Q + \sqrt{
    \frac{N-2}{2}}\sqrt{P-Q^2} \bigg).\]

This is the sharp version of a simple inequality for
the complete graph case that was proven in
\cite{Agbanusi.Bronski.2018} via the Hilbert--Schmidt equality in order to establish
the existence of a spectral gap. In particular it was shown there
that if $L(\boldsymbol{\gamma})$ is a graph Laplacian 
with weights given by $\boldsymbol{\gamma} \in \R^E$, then 

\begin{equation}
\label{rough}
N \bigg( Q - \sqrt{N-1} \sqrt{P - Q^2} \bigg)\leq \lambda_i(L(\boldsymbol{\gamma})) \leq N \bigg( Q + \sqrt{N-1} \sqrt{P - Q^2} \bigg)
\end{equation}
so the current inequality improves on the elementary estimate by roughly a
factor of $\sqrt{2}$ for large $N$. Furthermore an example in the paper of
Agbanusi and Bronski \cite{Agbanusi.Bronski.2018} shows that the current inequality is sharp: there exist
explicit Laplace matrices for which the upper and lower limits are
achieved.

\subsection{Erd\H{o}s--R\'enyi critical scaling}
Consider an Erd\H{o}s--R\'enyi random graph in the critical
regime, where the edge probability is $p = \frac{p_0\log N}{N}$ with
$p_0>1$ to ensure connectivity of the graph. It has been shown by Kolokolnikov, Osting, and Von Brecht \cite{KOV}
that in the critical scaling regime one has that
\[
  \lambda_2 \sim a(p_0) p_0 \log N + O(\sqrt{\log N}), \quad \text{as } N \to \infty,
\]
where $a(p_0) \in (0,1)$ is defined to be the solution to $p_0 -1 = a p_0 (1-\log(a)).$ The inequality holds in the sense that 
\[
  \bigg\vert \frac{\lambda_2}{N p} - a(p_0) \bigg\vert \leq \frac{C_1}{N p} 
\]
is true with probability at least $1-C_2 \exp\{- C_3 \sqrt{N p}\}$ for
some constants $C_1,C_2,C_3$.

We are not aware of any precise results for $\mu$, but it is fairly
easy to get (probabilistic) upper bounds since (\ref{eq:mu}) says that $\mu \leq 2 d_{max} + 2$. It follows a union
bound argument (see Appendix \ref{sec:App}) that there is a constant $C_4 > 0$ such that
\[{\mathbb P}\left(\max_i \deg(v_i) \leq C p_0 \log N\right) \geq 1 - C_4 N^{\beta(C)}
\]
where
\[
  \beta(C) = 2- p_0 - C p_0 \log C + C p_0.  
  \]
We can choose any $C$
such that $\beta(C)<0$. For simplicity if we take $C = 4$ we have
that $\beta \approx 1-2.55p_0.$ 

Since this is a random graph we also need an estimate of $E$, the total number of edges. Since the edges are
independent this essentially follows from the central limit theorem,
and we have that $E = \frac{p_0(N-1)}{2} \log N + o(N^{1/2 + \epsilon})$ for each $\epsilon > 0$ with high
probability. Combining the above we have that the {\em nonzero} eigenvalues satisfy
the lower bound
\[
  \lambda_i \geq p_0 \log(N) \bigg(a(p_0) Q \big( 1+o(1) \big) - \sqrt{4 (N-2) \big( 1+o(1) \big)} \sqrt{P-Q^2} \bigg)
  \]
with probability tending to $1$ as $N \rightarrow \infty$.

The upper
bound follows similarly --- we are not aware of any result on the
precise distribution of the largest eigenvalue of the Laplacian of an
Erd\H{o}s--R\'enyi graph, but the largest eigenvalue is obviously less
than twice the largest degree of the graph, giving
\[
  \lambda_i \leq p_0 \log N \bigg( 8Q \big( 1+o(1) \big) + \sqrt{4(N-2) \big( 1+o(1) \big)} \sqrt{P-Q^2} \bigg).
  \]

 Thus for an Erd\H{o}s--R\'enyi graph in the critical scaling regime
 the Laplacian is (with probability tending to $1$) positive definite
 if the inequality
 \[
   Q^2 > \frac{4(N-1)}{a^2(p_0)} (P-Q^2).
 \]
Note that with high probability the number of edges will be $p_0 N
\log N $ so the above estimate is asymptotically better than the naive
estimate (\ref{eqn:Naive}) by a factor of $\log N$. The constant in the above
is obviously not sharp, as we have used a crude estimate on the
largest eigenvalue of the adjacency matrix, and moreover no use has
been made of the constraint that $\tilde{\boldsymbol{\gamma}}$ is mean
zero. We do, however, expect that the scaling with $N$ is tight. 
 
\subsection{Erd\H{o}s--R\'enyi supercritical scaling} Now we consider the Erd\H{o}s--R\'enyi graphs in the supercritical regime with fixed edge probability $p \in (0,1)$. Observe that since $p \geq p_0 \log(N)/N$ for large $N$ we have that the graph is connected almost surely. Moreover, in this regime the average degree of a vertex is $pN$. In fact, a similar calculation to the one in Appendix \ref{sec:App} shows that
\[\mathbb{P}\bigg( \max_i \deg(v_i) \leq (1 + N^{-1/2 + \epsilon})pN \bigg) \to 1, \quad \text{as} \quad N \to \infty,\]
for $\epsilon \in (0,1/2)$.

In particular, this shows that
\[\mu \leq 2(1 + N^{-1/2 + \epsilon})pN + 2 \quad \text{and} \quad \lambda_N^G \leq 2(1 + N^{-1/2 + \epsilon})pN,\]
with probability tending to 1 as $N \to \infty$. By Theorem 2 in \cite{Juhasz}, for each $\epsilon > 0$ we have
\[\lambda^G_2 = pN + o(N^{\frac{1}{2} + \epsilon}), \quad \text{as} \quad N \to \infty.\]

The number of edges is $E = pN(N-1)/2 + o(N^{1 + \epsilon})$ as $N \to \infty$. Applying these bounds in the non-regular case of Theorem \ref{thm:Main} we have
\[pN \bigg(Q (1 + o(1)) - \sqrt{(N-2)\big( 1 + o(1) \big)} \sqrt{P-Q^2} \bigg) \leq \lambda_i \leq pN \bigg(2Q\big( 1 + o(1) \big) + \sqrt{(N-2) \big( 1 + o(1) \big)} \sqrt{P-Q^2} \bigg)\]
with probability tending to 1 as $N \to \infty$, implying positivity
when
\[
Q \gtrsim N (P-Q^2).
\]
Notice that when we
take $p = 1$ we recover the non-sharp bounds for the complete graph
topology in (\ref{rough}) for large $N$. This is again due to the fact
that we do not employ the constraint that $\tilde{\boldsymbol{\gamma}}$ has mean zero. The same comments that were made for the critical case apply
here as well --- the constants can be improved but we believe the
scaling to be optimal.

\subsection{Cyclic graph}

For the Cyclic graph on $N$ vertices, the graph Laplacian with all edge weights equal to 1 is twice the identity plus the circulant matrix generated by the vector $c = (0,-1,0,\dots,0,-1)$. The eigenvalues of the circulant matrix, and hence those of the Laplacian, can be computed explicitly. The smallest nonzero and largest eigenvalues are $\lambda_2^G = 2(1 - \cos(2 \pi/N))$ and $\lambda_N^G = 2$, respectively. Since the Cyclic graph has degree 2 we have the bounds $\mu =6 - \lambda_2^G$. Putting all this together we find that
\[2\bigg(Q(1 - \cos(2 \pi/N)) - \sqrt{3}\sqrt{\frac{N}{2}\frac{N-2}{N-1}} \sqrt{P - Q^2} \bigg) \leq \lambda_i \leq 2 \bigg( Q + \sqrt{3}\sqrt{\frac{N}{2} \frac{N-2}{N-1}}\sqrt{P - Q^2} \bigg).\]

In this example the naive inequality (\ref{eqn:Naive}) on $P$ and $Q$ is actually the stronger one since $(\lambda_2^G)^2/\mu \leq 2^2(1 - \cos(2 \pi /N))^2/2 < 1$ for all large enough $N$. This is in contrast to each of the other examples where $(\lambda_2^G)^2/\mu > 1$ for large $N$. 

\subsection{Random \texorpdfstring{$d$}{Lg}-regular graphs}
Consider the probability space consisting of $d$-regular graphs $(d \geq 3)$ on $N$ vertices with the uniform probability measure. Work of Freidman \cite{Freidman} implies that in this setting
\[\lambda_2^G \geq d - 2\sqrt{d-1} + o(1), \quad \text{as}~ N \to \infty,\]
with high probability. Applying the above inequality and that $\lambda_N^G \leq 2d$ to the bound for $d$-regular graphs in Theorem \ref{thm:Main} we have that

\[\lambda_i \geq d \bigg( Q \big( 1 - 2d^{-1/2} + o(1) \big) - \sqrt{\frac{N}{2} \big( 1 + 4d^{-1/2} + o(1) \big)}\sqrt{P-Q^2} \bigg),\]
and
\[\lambda_i \leq d \bigg(2Q + \sqrt{\frac{N}{2} \big(1 + 4d^{-1/2} + o(1) \big)} \sqrt{P-Q^2}\bigg). 
\]

Since $\mu = 2d + 2 - \lambda_2^G$ for $d$-regular graphs we have
\begin{equation}
\label{eq:dReg}
\frac{(\lambda_2^G)^2}{\mu} \geq \frac{d^2 - 4d \sqrt{d-1} + 4(d-1) + o(1)}{d + 2\sqrt{d-1} + 2 + o(1)}, \quad \text{as}~ N \to \infty,
\end{equation}
with high probability. Thus, for large $d$ the right side of (\ref{eq:dReg}) is roughly of size $d$, and in particular, $(\lambda_2^G)^2/\mu > 1$ almost surely as $N \to \infty$.

\section{Concluding Remarks}
In this paper we have derived bounds on the largest and smallest
eigenvalues of a graph Laplacian in terms of the mean and variance of
the edge weights and the second largest eigenvalue of the equally
weighted graph Laplacian. These inequalities are tight in the case of
the complete graph topology --- there exist edge weightings which
attain both the upper and the lower bounds. 

There are a couple of ways in which it might be interesting to extend
these results. Firstly while the bounds are tight for the complete
graph it is unlikely that this is the case for most graph
topologies. In the course of the proof we use the inequality
\[
  \lambda_{min}(A+B) \geq \lambda_{min}(A) + \lambda_{min}(B),  
  \]
where $A$ is the equally weighted Laplacian and $B$ is the fluctuation. In
the case of the complete graph topology the equally weighted Laplacian
is the identity on mean zero vectors, $A$ and $B$ commute, and this
inequality is actually an equality. This is not true for other
underlying graph topologies. It would be interesting to explore the
extent to which this inequality fails to be tight for topologies other
than the complete graph topology.

A second question concerns the quantity $\mu$, which is related to the maximum of a Rayleigh quotient for the adjacency of the line graph over mean zero vectors in
${\mathbb R}^E$. In the regular case we can, via a duality argument,
compute $\mu$ in terms of the second largest eigenvalue of the graph
Laplacian. For the non-regular case, we only bound $\mu$ in terms of  the maximum degree, which does not exploit
the mean zero condition at all. It would be interesting to develop a bound on $\mu$ in the non-regular case that exploits
the mean zero condition.

  {\bf Acknowledgements:} J.C.B. would like to acknowledge support
  from the National Science Foundation under grant NSF-DMS 1615418. D.K. would like to acknowledge support from the University of Illinois Campus Research Board award RB19045 (to Richard Laugesen) and the U.S. Department of Education through the Graduate Assistance in Areas of National Need (GAANN) program.

\appendix
\section{Upper bound on the maximum degree}
\label{sec:App}

\begin{prop}
Suppose that $G$ is an Erd\H{o}s--R\'enyi random graph where each
possible edge is present with probability $p = \frac{p_0 \log N}{N}$
with $p_0>1$. The probability that
\[\max_i \deg(v_i) \leq C p_0 \log(N)\]
tends to $1$ algebraically as $N \to \infty$ for $C$ large enough. Choosing $C \geq 4$ is sufficient.
\end{prop}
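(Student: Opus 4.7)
The plan is to prove the proposition by bounding the probability that a single vertex has high degree via a Chernoff-type estimate, then taking a union bound over the $N$ vertices. The key observation is that, since the edges of an Erd\H{o}s--R\'enyi graph are independent, each degree $\deg(v_i)$ is distributed as a binomial random variable with parameters $N-1$ and $p = p_0\log(N)/N$, so $\mathbb{E}[\deg(v_i)] = (N-1)p = p_0 \log(N)(1+o(1))$. Thus the proposition reduces to a tail bound for a binomial random variable exceeding its mean by a constant factor $C$.

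First I would invoke the standard multiplicative Chernoff bound
\[
\mathbb{P}\big(\mathrm{Bin}(n,p) \geq C\,np\big) \leq \left(\frac{e^{C-1}}{C^{C}}\right)^{np}, \qquad C > 1,
\]
and substitute $n = N-1$, $p = p_0\log(N)/N$. Using $np = p_0\log(N)(1+o(1))$ this yields
\[
\mathbb{P}\big(\deg(v_i) \geq C p_0\log N\big) \leq N^{\,p_0(C - 1 - C\log C) + o(1)}.
\]
A union bound over the $N$ vertices then gives
\[
\mathbb{P}\big(\max_i \deg(v_i) > C p_0\log N\big) \leq N \cdot N^{\,p_0(C - 1 - C\log C) + o(1)} = N^{\,\beta(C) + o(1)},
\]
with $\beta(C) = 1 - p_0 + Cp_0 - Cp_0\log C$, establishing the algebraic decay claimed in the proposition.

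Finally I would verify that $\beta(C)<0$ for $C$ sufficiently large, and in particular for $C=4$: since $-Cp_0\log C$ dominates as $C\to\infty$, clearly $\beta(C) \to -\infty$, and for $C = 4$ one computes $\beta(4) = 1 + 3p_0 - 4p_0\log 4 \approx 1 - 2.55\, p_0 < 0$ whenever $p_0 > 1$, matching the numerical estimate quoted in the body of the paper. The only subtlety worth flagging is the handling of the $o(1)$ error in the exponent (arising from the difference between $(N-1)p$ and $p_0\log N$); this is harmless because it can be absorbed into the overall multiplicative constant $C_4$ and does not affect the sign of $\beta(C)$ for any $C$ strictly above the threshold where $\beta$ vanishes. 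There is no conceptual obstacle---the argument is a textbook application of Chernoff plus union bound---only this minor bookkeeping issue.
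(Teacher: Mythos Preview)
Your proposal is correct and follows the same overall architecture as the paper: a union bound over the $N$ vertices together with a tail estimate for a single binomial degree. The only difference is in how that binomial tail is controlled. The paper bounds $\mathbb{P}(\deg(v_i)>K)$ by $N$ times the largest term of the binomial sum and then applies Stirling's approximation by hand to that term, whereas you invoke the standard multiplicative Chernoff bound as a black box. Your route is shorter and in fact slightly sharper: because the paper's crude tail-sum estimate introduces an extra factor of $N$, its exponent after the union bound is $2 - p_0 + Cp_0 - Cp_0\log C$, one larger than your $\beta(C) = 1 - p_0 + Cp_0 - Cp_0\log C$. (Amusingly, the numerical value $\beta(4)\approx 1-2.55\,p_0$ quoted in the body of the paper matches your formula rather than the paper's own.) Either exponent is negative for $C\geq 4$ and $p_0>1$, so both arguments establish the proposition; yours simply packages the Stirling computation inside the Chernoff inequality.
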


\begin{proof}
Observe that since 
\[\mathbb{P}\bigg(\max_i \deg(v_i) \leq C p_0 \log(N) \bigg) = 1 - \mathbb{P} \bigg( \deg(v_i) > C p_0 \log(N) ~ \text{for some} ~ i \bigg),\] 
by a union bound it suffices to show that
\[N\PP\bigg(\deg(v_i) > C p_0 \log(N) \bigg) \to 0, \quad \text{as} \quad N \to \infty.\]

Recall that $\deg(v_i)$ follows a binomial distribution so that for $K \in \mathbb{N}$ we have
\[\PP(\deg(v_i) > K) = \sum_{j = K}^{N-1} {N-1 \choose j} p^j (1-p)^{N-1 - j} \leq  \sum_{j = K}^N {N \choose j} p^j (1-p)^{N - j}.\]
The inequality follows since the probability that an event occurs in $N$ trials is larger than the probability that it occurs in $N-1$ trials. Now suppose that $K \geq K_{max}$ --- the $K$ for which the maximum of $K \mapsto {N \choose K} p^K (1-p)^{N - K}$ is achieved. In this case 
\[\PP(\deg(v_i) > K) \leq N {N \choose K} p^K (1-p)^{N - K}.\]

Now we apply Sterlings approximation to estimate $N \choose K$ to find that
\[\PP(\deg(v_i) > K) \lesssim N \frac{N^N e^{-N}\sqrt{2 \pi N}}{K^K e^{-K}\sqrt{2 \pi K}(N - K)^{N-K}e^{-(N-K)}\sqrt{2 \pi (N-K)}} p^K (1-p)^{N-K}.\]
After regrouping and some elementary estimates we have 
\[\PP(\deg(v_i) > K) \lesssim N \sqrt{\frac{N}{K(N-K)}} \bigg( \frac{Np}{K} \bigg)^K \bigg( \frac{N(1-p)}{N-K} \bigg)^{N-K}.\]
Now we choose $K = CNp$ for $C > 1$ so that $K \geq K_{max}$ for large $N$ since $K_{max} \leq \lfloor (N+1)p \rfloor$. It follows that 

\[\PP(\deg(v_i) > K) \lesssim N \bigg( \frac{1}{C} \bigg)^{C p_0 \log(N)} \bigg( \frac{1 - \frac{p_0 \log(N)}{N}}{1 - \frac{C p_0 \log(N)}{N}}\bigg)^{N-K} \lesssim N \bigg( \frac{1}{C} \bigg)^{C p_0 \log(N)} \bigg( \frac{1 - \frac{p_0 \log(N)}{N}}{1 - \frac{C p_0 \log(N)}{N}}\bigg)^N\]
Using that $(1 - p_0\log(N)/N)^N = e^{N \log(1 - p_0\log(N)/N)}$ and Taylor expanding the outer logarithm we have
\[\PP(\deg(v_i) > K) \lesssim N \bigg( \frac{1}{C} \bigg)^{C p_0 \log(N)} \frac{e^{-p_0 \log(N)}}{e^{-C p_0 \log(N)}} = N N^{-C p_0 \log(C)} N^{-p_0} N^{C p_0}.\]
Combining the exponents shows that
\[N\PP\bigg(\deg(v_i) > C p_0 \log(N) \bigg) \lesssim N^{2-C \log(C) p_0 + C p_0 - p_0}.\]
Since $p_0 > 1$ choosing $C > 3.6$ gives algebraic decay. In particular, choosing $C = 4$ gives 
\[\PP\bigg(\max_i\deg(v_i) > 4 p_0 \log(N)\bigg) \lesssim N^{1 - 2.54 p_0}.\]

\end{proof}

\bibliographystyle{unsrt}
\bibliography{SharpInequality}

\begin{thebibliography}{10}

\bibitem{kunegis.et.al.2010}
J.~Kunegis, S.~Schmidt, A.~Lommatzsch, J.~Lerner, E.~W. De~Luca, and
  S.~Albayrak.
\newblock Spectral analysis of signed graphs for clustering, prediction and
  visualization.
\newblock In {\em Proceedings of the 2010 SIAM International Conference on Data
  Mining}, pages 559--570. 2010.

\bibitem{Hsieh.Chiang.Dhillon.2012}
C.~Hsieh, K.~Chiang, and I.~S. Dhillon.
\newblock Low rank modeling of signed networks.
\newblock In {\em Proceedings of the 18th ACM SIGKDD International Conference
  on Knowledge Discovery and Data Mining}, KDD `12, pages 507--515, New York,
  NY, USA, 2012. ACM.

\bibitem{Li}
P.~Li and O.~Milenkovic.
\newblock Inhomogeneous hypergraph clustering with applications.
\newblock {\em 31st Conference on Neural Information Processing Systems
  (NIPS)}, 2017.

\bibitem{Hu.Zheng.2013}
J.~Hu and W.X. Zheng.
\newblock Bipartite consensus for multi-agent systems on directed signed
  networks.
\newblock In {\em IEEE 52nd Annual Conference on Decision and Control}, pages
  3451--3456. IEEE, 2013.

\bibitem{Hu.Zheng.2014}
J.~Hu and W.X. Zheng.
\newblock Emergent cooperative behaviors on coopetition networks.
\newblock {\em Phys. Lett. A}, 378(26--27):1787--1796, 2014.

\bibitem{Xiao.Boyd.2004}
L.~Xiao and S.~Boyd.
\newblock Fast linear iterations for distributed averaging.
\newblock {\em Systems \& Control Letters}, 53(1):65 -- 78, 2004.

\bibitem{Xiao.Boyd.Diaconis.2004}
S.~Boyd, P.~Diaconis, and L.~Xiao.
\newblock Fastest mixing {M}arkov chain on a graph.
\newblock {\em SIAM Review}, 46(4):667--689, 2004.

\bibitem{Xu.Egerstedt.Droge.Schilling.2013}
Z.~Xu, M.~Egerstedt, G.~Droge, and K.~Schilling.
\newblock Balanced deployment of multiple robots using a modified {K}uramoto
  model.
\newblock In {\em 2013 American Control Conference}, pages 6138--6144. 2013.

\bibitem{Jadbabaie.Motee.Barahona.2004}
A.~Jadbabaie, N.~Motee, and M.~Barahona.
\newblock On the stability of the {K}uramoto model of coupled nonlinear
  oscillators.
\newblock In {\em Proceedings of the 2004 American Control Conference}, pages
  4296--4301 Vol. 5. 2004.

\bibitem{Dorfler.Bullo.2012}
F.~D\"orfler and F.~Bullo.
\newblock Synchronization and transient stability in power networks and
  nonuniform {K}uramoto oscillators.
\newblock {\em SIAM Journal on Control and Optimization}, 50(3):1616--1642,
  2012.

\bibitem{Mirollo.Strogatz.2005}
R.~E. Mirollo and S.~H. Strogatz.
\newblock The spectrum of the locked state for the {K}uramoto model of coupled
  oscillators.
\newblock {\em Physica D: Nonlinear Phenomena}, 205(1):249--266, 2005.
\newblock Synchronization and Pattern Formation in Nonlinear Systems: New
  Developments and Future Perspectives.

\bibitem{Bronski.DeVille.Park.2012}
J.C. Bronski, L.~DeVille, and M.~J. Park.
\newblock Fully synchronous solutions and the synchronization phase transition
  for the finite-n {K}uramoto model.
\newblock {\em Chaos: An Interdisciplinary Journal of Nonlinear Science},
  22(3):033133, 2012.

\bibitem{DeVille.Ermentrout.2016}
L.~DeVille and B.~Ermentrout.
\newblock Phase-locked patterns of the {K}uramoto model on 3-regular graphs.
\newblock {\em Chaos: An Interdisciplinary Journal of Nonlinear Science},
  26(9):094820, August 2016.

\bibitem{Bronski.DeVille.2014}
J.~C. Bronski and L.~DeVille.
\newblock Spectral theory for dynamics on graphs containing attractive and
  repulsive interactions.
\newblock {\em SIAM Journal on Applied Mathematics}, 74(1):83--105, 2014.

\bibitem{Zelazo.Burger.2014}
D.~Zelazo and M.~B{\"u}rger.
\newblock On the definiteness of the weighted {L}aplacian and its connection to
  effective resistance.
\newblock In {\em 53rd IEEE Conference on Decision and Control}, pages
  2895--2900, 2014.

\bibitem{Basar1.2016}
W.~Chen, J.~Liu, Y.~Chen, S.~Z. Khong, D.~Wang, T.~Ba{\c s}ar, L.~Qiu, and
  K.~H. Johansson.
\newblock Characterizing the positive semidefiniteness of signed {L}aplacians
  via effective resistances.
\newblock In {\em 2016 IEEE 55th Conference on Decision and Control (CDC)},
  pages 985--990, 2016.

\bibitem{Basar2.2016}
W.~Chen, D.~Wang, J.~Liu, T.~Ba{\c s}ar, K.~H. Johansson, and L.~Qiu.
\newblock On semidefiniteness of signed {L}aplacians with application to
  microgrids.
\newblock {\em IFAC-PapersOnLine}, 49(22):97--102, 2016.

\bibitem{Zelazo.Burger.2017}
D.~Zelazo and M.~B{\"u}rger.
\newblock On the robustness of uncertain consensus networks.
\newblock {\em IEEE Transactions on Control of Network Systems}, 4(2):170--178,
  2017.

\bibitem{Feidler}
M.~Feidler.
\newblock Laplacian of graphs and algebraic connectivity.
\newblock {\em Banach Center Publications}, 25:57--70, 1989.

\bibitem{HLW.2006}
S.~Hoory, N.~Linial, and A.~Wigderson.
\newblock Expander graphs and their applications.
\newblock {\em Bull. Amer. Math. Soc. (N.S.)}, 43(4):439--561, 2006.

\bibitem{JensenStyan}
S.~T. Jensen and G.~P.~H. Styan.
\newblock Some comments and a bibliography on the {L}auguerre-{S}amuelson
  inequality with extensions and application in statistics and matrix theory.
\newblock {\em Analytic and Geometric Inequalities and applications},
  478(4):151--181, 1999.

\bibitem{Agbanusi.Bronski.2018}
I.~Agbanusi and J.C. Bronski.
\newblock Emergence of balance from a model of social dynamics.
\newblock {\em SIAM Journal on Applied Math.}, to appear.

\bibitem{KOV}
T.~Kolokolnikov, B.~Osting, and J.~Von Brecht.
\newblock Algebraic connectivity of the {E}rd\"os-{R}\'eyni graphs near the
  connectivity threshold.
\newblock
  (\url{https://www.mathstat.dal.ca/~tkolokol/papers/braxton-james.pdf}),
  August 2014.

\bibitem{Juhasz}
F.~Juh\'asz.
\newblock The asymptotic behaviour of {F}iedler's algebraic connectivity for
  random graphs.
\newblock {\em Discrete Mathematics}, 96:59--63, December 1991.

\bibitem{Freidman}
J.~Friedman.
\newblock A proof of {A}lon's second eigenvalue conjecture and related
  problems.
\newblock {\em Memoirs of the American Mathematical Society}, 195(910), 2008.

\end{thebibliography}

\end{document}